\documentclass[twoside,a4paper]{article}
\bibliographystyle{jplain}
\usepackage{amsmath,amsthm,amssymb,mathrsfs}
\usepackage{enumerate}
\usepackage[dvipdfm]{pict2e}
\usepackage{bbm,cases}
\usepackage{color}
\usepackage{graphicx}

\usepackage{floatflt}

\usepackage{mathptmx}
\usepackage[scaled]{helvet}

\newtheorem{dfn}{Definition}[section]
\newtheorem{thm}[dfn]{Theorem}
\newtheorem{lem}[dfn]{Lemma}

\newtheorem{prop}[dfn]{Proposition}\makeatletter

             \@addtoreset{equation}{section}
\makeatother

\setlength{\oddsidemargin}{-.5em}

\newcommand{\x}{{\mathbbm x}}

\newcommand{\n}{{\bf n}}

\newcommand{\p}{\mathbbm{p}}

\newcommand{\dis}{\displaystyle}
\newcommand{\ve}{\varepsilon}
\newcommand{\Var}{\textrm{Var}}

\newcommand{\J}{\mathbb{J}}

\allowdisplaybreaks[4]
\begin{document}
\title{A remark on the bound for the  free energy of directed polymers in random environment in 1+2 dimension}
\author{Makoto Nakashima \footnote{nakamako@math.tsukuba.ac.jp, Division of Mathematics, Graduate School of Pure and Applied Sciences,University of Tsukuba, 1-1-1 Ten-noudai, Tsukuba-shi, Ibaraki-ken, Japan } }
\date{}
\pagestyle{myheadings}
\markboth{Makoto Nakashima}{Free Energy of 2D DPRE}
\maketitle

\sloppy

\begin{abstract}
We consider the behavior of the quantity $p(\beta)$; the free energy  of directed polymers in random environment in $1+2$ dimension, where $\beta$ is inverse temperature. We know that the free energy is strictly negative  when $\beta$ is not zero. In this paper, we will prove that  $p(\beta)$ is bounded from above by $-\exp\left(	-\frac{c_\ve}{\beta^{2+\ve}}	\right)$ for small $\beta$, where $c_\ve>0$ is a constant depending on $\ve>0$. Also, we will suggest a strategy to get a sharper asymptotics.

\end{abstract}

\vspace{1em}
{\bf AMS 2010 Subject Classification:}  82D60, 60K37, 82B44

\vspace{1em}{\bf Key words:}  Directed polymer, Random environment, Free energy.

\vspace{1em}
We denote by  $(\Omega, {\cal F},P )$ a probability space. We denote by $P[X]$ the expectation of random variable $X$ with respect to $P$.  Let $\mathbb{N}=\{0,1,2,\cdots\}$, $\mathbb{N}^*=\{1,2,3,\cdots\}$, and $\mathbb{Z}=\{0,\pm 1,\pm 2,\cdots\}$.  Let $C_{x_1,\cdots,x_p}$ or $C(x_1,\cdots,x_p)$ be a non-random constant which depends only on the parameters $x_1,\cdots,x_p$. For $x=(x_1,\cdots,x_d)\in \mathbb{R}^d$, $|x|$ stands for the $l^1$-norm: $|x|=\sum_{i=1}^d |x_i|$. For $\xi=(\xi_x)_{x\in\mathbb{Z}^d}\in \mathbb{R}^{\mathbb{Z}^d}$, $|\xi|=\sum_{x\in\mathbb{Z}^d}|\xi_x|$.


\section{Introduction}

Directed polymers in random environment was introduced by Huse and Henly to study an interaction for polymer chains and random medium. The model can be represented in terms of polymers and random environment.
\begin{itemize}
\item \textit{The random walk:} $(S,P_S^x)$ is a simple random walk on the $d$-dimensional integer lattice $\mathbb{Z}^d$ with starting $x$. In particular, we write $P_S^0=P_S$. We define $p_n(x,y)=P_S^x(S_n=y)$.

\item \textit{The random environment:} Let $\{\eta(i,x):(i\times x)\in\mathbb{N}^*\times \mathbb{Z}^d\}$ be  i.i.d.\,$\mathbb{R}$-valued random variables defined on the probability space $(\Omega',{\cal G},Q)$. We denote by ${\cal G}_n$ the $\sigma$-field generated by $\{\eta(i,x):1\leq i\leq n, x\in\mathbb{Z}^d\}$. We assume that $Q[\exp(\beta \eta(1,x))]=\exp(\lambda(\beta))<\infty $ for any $\beta\in \mathbb{R}$. We define the shift of environment $\theta_{m}$ by $\theta_{m}\circ \eta(n,x)=\eta(n+m,x)$ for $m,n\in\mathbb{N}$ and $x\in\mathbb{Z}^d$.
\end{itemize}
Then, we can consider Gibbs measure $\mu_n$ on $\mathbb{Z}^d$  under fixed environment $\eta$:\begin{align*}
\mu_n(x)=\frac{1}{W_n}\frac{P_S\left[	\exp(\beta \sum_{i=1}^n\eta(i,S_i))		:S_n=x\right]}{\exp(n\lambda (\beta))},
\end{align*}
where $W_n$ is a partition function of directed polymers in random environment defined by \begin{align}
W_n(\beta)=W_n=P_S\left[	\exp\left(\beta\sum_{i=1}^n\eta(i,S_i)-n\lambda(\beta)\right)	\right].\label{part}
\end{align}
We define the Hamiltonian $H_n(S)$ by \begin{align*}
H_n(S)=\sum_{i=1}^n\eta(i,S_i).
\end{align*}
Then, $W_n$ is a positive martingale on $(\Omega',{\cal G}_n,Q)$ so that there exists the limit  \begin{align*}
W_\infty =\lim_{n\to \infty}W_n ,\ \ Q\text{-a.s.}
\end{align*}
Also, we know that the $0$-$1$ law for $W_\infty$:\begin{align*}
Q(W_\infty=0)\in \{0,1\}.
\end{align*}
We refer to the case $Q(W_\infty=0)=0$ as \textit{weak disorder} and to the other case as \textit{strong disorder}.
Then, it is known the existence of the phase transition, that is  there exist $\beta_-^1(d),\beta_+^1(d)\in [0,\infty]$ (they depend on $d$ and the distribution of environment) such that \begin{align*}
\beta\in (-\beta_-^1(d),\beta_+^1(d))&\Rightarrow \textit{weak disorder}\\
\beta<-\beta_-^1(d),\ \text{or }\beta_+^1(d) <\beta&\Rightarrow \textit{strong disorder.}
\end{align*}

We can investigate  localization/delocalization phenomena in weak/strong disorder resime\cite{Bol,CarHu2,ComShiYos,ImbSpe,ComYos3}.

The following result implies the existence of the free energy $p(\beta)$\cite{ComShiYos,ComYos}:

\begin{prop}There exists the quantity \begin{align*}
p(\beta)=\lim_{n\to \infty} \frac{1}{n}\log W_n,\ \ Q\text{-a.s.}.
\end{align*}
Moreover, $p(\beta)$ is a non-positive, non-random constant and is equal to \begin{align}
\lim_{n\to \infty}\frac{1}{n}Q\left[\log W_n\right]=\sup_n\frac{1}{n}Q\left[\log W_n\right].\label{free}
\end{align}
\end{prop}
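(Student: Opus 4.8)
The plan is to exploit the standard superadditivity of the sequence $a_n := Q[\log W_n]$, obtained from the Markov property of the random walk together with the translation invariance of the environment, and then invoke Fekete's subadditive lemma in its superadditive form, which simultaneously yields existence of the limit in \eqref{free} and its equality with the supremum. First I would decompose the walk at time $m$: writing $S = (S_0,\ldots,S_n)$ and conditioning on $S_m = x$, the factor $\exp(\beta \sum_{i=1}^n \eta(i,S_i) - n\lambda(\beta))$ splits as the product of the contribution on $[0,m]$ and the contribution on $[m, m+k]$ (with $n = m+k$), the latter being, after the shift $\theta_m$, a copy of $W_k$ built from the walk started afresh at $x$. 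By spatial homogeneity of $\eta$ and translation invariance of the simple random walk, $P_S^x$-averaging the shifted part gives exactly $W_k \circ \theta_m$ in distribution, independent of ${\cal G}_m$. Taking $P_S$-expectation and using $\sum_x P_S(S_m = x) = 1$ together with Jensen's inequality (concavity of $\log$) on the outer average over $x$, one gets $\log W_{m+k} \ge \log W_m + \log(W_k \circ \theta_m)$ pointwise up to the Jensen step; applying $Q[\cdot]$ and using that $W_k\circ\theta_m$ has the same law as $W_k$ yields $a_{m+k} \ge a_m + a_k$.

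Next I would check integrability: $Q[\log W_n] \le \log Q[W_n] = 0$ by Jensen and the martingale (mean one) property, so $a_n \le 0$; for the lower bound $a_n > -\infty$ one restricts the walk to a single deterministic trajectory, e.g. $S_i \equiv 0$ is not allowed for SRW but a fixed nearest-neighbour path of length $n$ has probability $(2d)^{-n}$, giving $W_n \ge (2d)^{-n}\exp(\beta \sum_{i=1}^n \eta(i,\cdot) - n\lambda(\beta))$ along that path and hence $\log W_n \ge -n\log(2d) + \beta\sum \eta(i,\cdot) - n\lambda(\beta)$, which is integrable. So $a_n \in (-\infty, 0]$ and Fekete gives $\lim_n a_n/n = \sup_n a_n/n =: p(\beta) \le 0$, a non-random constant, establishing \eqref{free}.

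The remaining, and genuinely harder, point is the almost-sure convergence $\frac{1}{n}\log W_n \to p(\beta)$, $Q$-a.s., and the identification of the a.s. limit with the $L^1$-limit. Here the martingale $W_n$ is of no direct help since $\log W_n$ is not itself a (super)martingale in a usable way; the standard route is a concentration argument. I would estimate the fluctuations of $\log W_n$ around its mean by a martingale-difference decomposition along the filtration generated by adding the environment variables row by row (time slice by time slice), i.e. $\log W_n - Q[\log W_n] = \sum_{i=1}^n (D_i - Q[D_i \mid {\cal G}_{i-1}])$ with $D_i = Q[\log W_n \mid {\cal G}_i]$; the increments are controlled because changing one time-slice of $\eta$ changes $\log W_n$ by a bounded-in-expectation amount (this is where one needs a mild assumption on the environment, such as the exponential moment already assumed, or boundedness/concentration of $\eta$). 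An Azuma–Hoeffding or Burkholder-type bound then gives $Q(|\log W_n - a_n| > n\delta) \le \exp(-c n \delta^2)$ (or a polynomial-in-$n$ tail under only moment assumptions), and Borel–Cantelli upgrades the $L^1$ convergence $\frac{1}{n}a_n \to p(\beta)$ to the a.s. statement. The main obstacle is precisely obtaining sharp enough control on the martingale increments of $\log W_n$ under the sole hypothesis $Q[e^{\beta\eta}] < \infty$; if one is content to cite \cite{ComShiYos,ComYos} for this step, the proof reduces to the superadditivity computation above, but a self-contained argument must handle the possibility of heavy-tailed $\eta$ via truncation.
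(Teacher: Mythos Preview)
The paper does not supply its own proof of this proposition; it is quoted as a known result with a reference to \cite{ComShiYos,ComYos} immediately preceding the statement. So there is nothing to compare against beyond noting that your outline is precisely the standard argument found in those references: superadditivity of $n\mapsto Q[\log W_n]$ from the Markov decomposition at time $m$, Fekete's lemma for the deterministic limit and the $\sup$, Jensen for $p(\beta)\le 0$, and a concentration/martingale-difference estimate for the a.s.\ statement.

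Two small clarifications on your write-up. First, the inequality $\log W_{m+k}\ge \log W_m+\log(W_k\circ\theta_m)$ is \emph{not} true pointwise (the second factor genuinely depends on the midpoint $x$ through $W_k^x\circ\theta_m$); what holds pointwise is
\[
\log W_{m+k}\;\ge\;\log W_m+\sum_{x}\frac{W_m(x)}{W_m}\,\log\bigl(W_k^x\circ\theta_m\bigr),
\]
and only after taking $Q$-expectation, using that $W_m(x)/W_m\in{\cal G}_m$ is independent of $W_k^x\circ\theta_m$ and that $Q[\log W_k^x]=Q[\log W_k]$ by spatial shift-invariance, does one obtain $a_{m+k}\ge a_m+a_k$. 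Your phrase ``pointwise up to the Jensen step'' blurs this. Second, for the a.s.\ convergence you correctly identify the concentration step as the nontrivial input; under the standing assumption $\lambda(\beta)<\infty$ for all $\beta$, the row-by-row martingale increments of $\log W_n$ are indeed controlled (see \cite{ComShiYos}), so the Borel--Cantelli argument goes through without additional hypotheses. Your caveat about heavy tails is therefore not needed here.
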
 
We call the quantity $p(\beta)$ the free energy.

Also, the phase transition occurs on $p(\beta)$: there exist $\beta_-^2(d),\beta_+^2(d)\in[0,\infty]$ such that \begin{align*}
\beta\in (-\beta_-^2(d),\beta_+^2(d))&\Rightarrow p(\beta)=0\\
\beta<-\beta_-^2(d),\ \text{or }\beta_+^2(d)<\beta&\Rightarrow p(\beta)<0.
\end{align*}
We refer to the latter case as the \textit{very strong disorder}.

In particular, when $d=1$ or $2$, \begin{align}
\beta_-^1(d)=\beta_+^1(d)=\beta_-^2(d)=\beta_+^2(d)=0\label{vss}
\end{align}
  and when $d\geq 3$, \begin{align*}
  -\beta_-^2(d)\leq -\beta_-^1(d)<0<\beta_+^1(d)\leq \beta_+^2(d)\end{align*}
   \cite{CarHu,ComShiYos,ComVar,Lac}.
In \cite{Lac}, it is proved that if  $|\beta|$ is small enough, then \begin{align}
-\frac{1}{c}\beta^4(1+(\log \beta)^2)&\leq p(\beta)\leq c\beta^4,\ \ \text{for $d=1$}\label{d1}
\intertext{and}
-\exp\left(-\frac{1}{c\beta^2}\right)&\leq p(\beta)\leq -\exp\left(	-\frac{c}{\beta^{4}}\right),\ \ \text{for $d=2$,}\label{d2}
\end{align}
 where $c$ is a positive constant.
 
 We remark that when environment $\eta$ is Gaussian or infinitely divisible random variables,  $\beta^4$ is crucial for $d=1$ \cite{Lac,Wat}. 
 
There are some results on  asymptotics of free energy for several models related to  directed polymers in random environment. We know the explicit formula of free energy for directed polymers in Brownian environment\cite{MorOCo},  asymptotics of free energy at  low temperature for Brownian directed polymers in Poissonian medium\cite{ComYos2}, and  at high temperature for Brownian directed polymers in Gaussian environment with $d=1$ and $d\geq 1$ with long range correlation\cite{Lac2}.

 In the present paper, we will show that the lower bound in (\ref{d2}) is ``crucial" for $d=2$.  
 \begin{thm}\label{main}
 When $d=2$, for any $\ve>0$ there exist $c_\ve>0$  such that for  $|\beta|\leq \beta_0$\begin{align*}
 p(\beta)\leq -\exp\left(-\frac{c_\ve}{\beta^{2+\ve}}\right).
 \end{align*}  
 \end{thm}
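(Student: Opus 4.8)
The plan is to follow the fractional-moment / change-of-measure strategy behind the upper bound in (\ref{d2}), optimizing all free parameters as functions of $\beta$ and refining the change of measure used in \cite{Lac}, so that the construction already forces the bound to be negative at a scale $n$ of order $\exp(c_\ve\beta^{-2-\ve})$ rather than $\exp(c\beta^{-4})$.

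\emph{Step 1 (reduction to a finite-scale estimate).} For $\theta\in(0,1)$ write $W_n^{\mathrm{pp}}(0,x):=P_S\!\left[\exp\bigl(\beta H_n(S)-n\lambda(\beta)\bigr):S_n=x\right]$, so that $W_n=\sum_x W_n^{\mathrm{pp}}(0,x)$. The Markov identity $W_{(N+M)n}^{\mathrm{pp}}(0,z)=\sum_y W_{Nn}^{\mathrm{pp}}(0,y)W_{Mn}^{\mathrm{pp}}(y,z)$, the inequality $(\sum_j a_j)^\theta\le\sum_j a_j^\theta$, the independence of disjoint time-layers of $\eta$, and translation invariance make $N\mapsto Q\!\left[\sum_x(W_{Nn}^{\mathrm{pp}}(0,x))^\theta\right]$ submultiplicative; together with $W_n^\theta\le\sum_x(W_n^{\mathrm{pp}}(0,x))^\theta$, Jensen's inequality, and the variational formula (\ref{free}) this yields
\begin{align*}
p(\beta)\ \le\ \frac{1}{\theta\,n}\,\log\,Q\!\left[\ \sum_{x\in\mathbb{Z}^2}\bigl(W_n^{\mathrm{pp}}(0,x)\bigr)^\theta\ \right],\qquad\theta\in(0,1),\ n\ge1 .
\end{align*}
As in \cite{Lac}, one carries this out on a coarse-grained version in which the relevant entropy is confined to boxes of side $L=L_\beta$ — so the sum over $x$, and the entropic factor appearing in Step 2, runs over $\sim L^2$ sites rather than over the $\sim n$ sites an unconstrained walk can reach — at a controlled additional cost. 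It then suffices to produce $\theta_\beta,L_\beta$ and $n_\beta\le\exp(c_\ve\beta^{-2-\ve})$ for which the right-hand side is $\le-\tfrac12\,n_\beta^{-1}$, say; quantifying then gives the stated bound.

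\emph{Step 2 (scale-dependent change of measure).} To estimate the right-hand side one tilts the environment downward on the space-time region swept by the (confined) walk, $d\widetilde Q/dQ=\prod_{(i,x)}\exp\bigl(-\rho_i\,\eta(i,x)-\lambda(-\rho_i)\bigr)$, writes $\sum_x(W_n^{\mathrm{pp}}(0,x))^\theta$ as a $\widetilde Q$-expectation, and applies Hölder with exponents $\tfrac1{1-\theta}$ and $\tfrac1\theta$; this separates a \emph{cost} $\widetilde Q[(dQ/d\widetilde Q)^{1/(1-\theta)}]^{1-\theta}$ — a product of one-site exponential moments of $\eta$ — from a \emph{gain} bounded by (entropic factor)$\,\times\,\widetilde Q[W_n]^\theta$, each step of the confined walk inside the lowered region contributing to $\widetilde Q[W_n]$ a factor slightly below $1$. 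The new ingredient is to let the tilt strength $\rho_i$ and the transverse width of the lowered region depend on the dyadic scale of the time coordinate $i$, matching the diffusive spreading of $S$ and the fact that the planar self-intersection local time grows only logarithmically; after optimizing the profile $(\rho_i)_i$, the number of scales, $\theta$, and $L$ against the cost and the entropic factor, the right-hand side of the displayed inequality drops below $-\tfrac12 n^{-1}$ once $\log n\gtrsim\beta^{-2-\ve}$, the residual exponent $\ve$ (rather than the conjectural $2$) being the price of a scale-dependent tilt that cannot be simultaneously optimal at every scale together with the slack left in the Hölder and coarse-graining steps.

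The step I expect to be the real obstacle is exactly this optimization: in two dimensions the disorder gain produced by the change of measure is only barely larger than the entropy it must defeat, so the profile $(\rho_i)_i$, the box size $L_\beta$, and the exponent $\theta_\beta$ have to be chosen with precision; it is the geometric decay of the $\rho_i$ over the $\asymp\log n_\beta$ dyadic scales, coupled with the coarse-graining that caps the entropic factor at a power of $L_\beta^2$, that converts Lacoin's $\beta^{-4}$ into $\beta^{-2-\ve}$. The ancillary estimates — uniform control over all $n$ time-slices of the event that $S$ exits the nested boxes, the bound on the cost of the coarse-graining, and the small-$\beta$/small-$\rho$ Taylor expansions of $\lambda(\cdot)$ used to handle error terms — are routine; the conceptual content is entirely in the construction of $\widetilde Q$.
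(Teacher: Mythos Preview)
Your approach is not the paper's, and its decisive step is left as an assertion.

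The paper's change of measure is not a product tilt of the environment at all. For each coarse block it introduces the $q$-th order chaos
\[
A_0^{q,N}=\frac{\sqrt{\log N}}{N}\sum_{y\in B_0^N}P_{S'}^y\Bigl[\sum_{1\le j_1<\cdots<j_q\le N}\prod_{i=1}^q\bigl(e_{j_i,S'_{j_i}}(\gamma_N)-1\bigr)\Bigr],\qquad \gamma_N=\frac{\gamma}{\sqrt{\log N}},
\]
and sets $g_Z=\exp\bigl(\sum_k f_K(A_k^{q,N})\bigr)$ with $f_K(x)=-K\,1\{x>e^{K^2}\}$. After H\"older one shows (i) $Q[(A_0^{q,N})^2]\le C$, so the cost factor is $\le 2^n$, and (ii) under the path-tilted law $Q_S$ the functional $A_0^{q,N}$ has mean of constant order when $\beta=C_1(\log N)^{-(q-1)/(2q)}$ and variance tending to $0$. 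The exponent in the theorem is then $2q/(q-1)$, and taking $q$ large yields $2+\ve$; this is the explicit mechanism producing the $\ve$, and the point of the $q$-multilinearity is precisely to harvest the $(\log N)^{q-1}$ growth of the $(q{+}1)$-fold intersection structure that a first-order tilt cannot see.

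By contrast your $d\widetilde Q/dQ=\prod_{(i,x)}e^{-\rho_i\eta(i,x)-\lambda(-\rho_i)}$ acts at first order: the per-step gain in $\widetilde Q[W_n]$ is $e^{\lambda(\beta-\rho_i)-\lambda(-\rho_i)-\lambda(\beta)}\approx e^{-\lambda''(0)\beta\rho_i}$, and it is effective only while $S_i$ lies in the lowered region. Two concrete issues are not addressed. First, within a coarse block the starting point already ranges over a box of side $\asymp\sqrt N$, so a region of width $C\sqrt i$ at early times $i\ll N$ cannot contain it; those scales contribute no gain but still carry cost, and you give no substitute. Second, paths that exit the nested regions contribute to $\widetilde Q[\hat W]$ at essentially undamped weight, while the cost is paid regardless of the path; making their probability small enough to beat the cost forces the region widths (hence the cost) to grow with $N$, which feeds back into the very balance you are trying to optimize. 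Your sentence ``after optimizing the profile $(\rho_i)_i$\ldots the right-hand side drops below $-\tfrac12 n^{-1}$ once $\log n\gtrsim\beta^{-2-\ve}$'' is exactly the computation that constitutes the proof, and it is absent; nor is there any parameter in your scheme, analogous to $q$, that would explain where an arbitrary $\ve>0$ comes from.
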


With (\ref{d2}), we may conjecture that as $|\beta|\to 0$ \begin{align}
- \log |p(\beta)|\asymp \frac{1}{\beta^2}. \label{logasym}
\end{align}
Also, we will give a much sharper  conjecture in section \ref{4}.

The proof  is a modification of the proof for the directed polymers in random environment by Lacoin \cite{Lac} and for pinning model  by Giacomin, Lacoin and Toninelli\cite{GiaLacTon}.


\section{Preliminaly}
Before giving the proof, we will give an intuitive reason for (\ref{logasym}) and some calculations which we will use later.
\begin{lem}\label{square}
Let $\beta_N=\frac{\beta}{\sqrt{\log N}}$ for $\beta\in \mathbb{R}$ such that $0<|\beta|<\sqrt{\frac{\pi}{\lambda''(0)}}$. Then, we have that \begin{align*}
0<\sup_{N}Q\left[W_N^2(\beta_N)\right]-1<\infty.
\end{align*}
Moreover, if $|\beta|>\sqrt{\frac{\pi}{\lambda''(0)}}$, then we have that \begin{align*}
\sup_{N}\frac{1}{N}\left(Q\left[W_N^2(\beta_N)\right]-1\right)=\infty.
\end{align*}
\end{lem}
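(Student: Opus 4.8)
The second moment of the partition function is the workhorse here, so the plan is to compute $Q[W_N^2(\beta_N)]$ explicitly in terms of the Green's function of the difference walk. Using the standard identity for i.i.d.\ environments, if $S,S'$ are two independent simple random walks started at the origin and $\Delta = S - S'$, then
\begin{align*}
Q\left[W_N^2(\beta_N)\right] = P_S^{\otimes 2}\left[\exp\left(\left(\lambda(2\beta_N)-2\lambda(\beta_N)\right)\sum_{i=1}^N \mathbbm{1}_{\{S_i = S_i'\}}\right)\right].
\end{align*}
Write $\gamma_N = \lambda(2\beta_N)-2\lambda(\beta_N)$; since $\lambda(0)=\lambda'(0)=0$, a Taylor expansion gives $\gamma_N = \lambda''(0)\beta_N^2 + o(\beta_N^2) = \frac{\lambda''(0)\beta^2}{\log N}(1+o(1))$ as $N\to\infty$. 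The number of collisions $\sum_{i=1}^N \mathbbm{1}_{\{S_i=S_i'\}}$ is governed by the difference walk $\Delta$, which is (up to a time change) a simple random walk on $\mathbb{Z}^2$; by the local CLT its Green's function up to time $N$ satisfies $\sum_{i=1}^N P(\Delta_i = 0) = \frac{1}{\pi}\log N + O(1)$ (the constant $\tfrac1\pi$ coming from $d=2$ and the period-$2$ correction). So the "effective" exponent $\gamma_N \cdot (\text{expected collisions}) \to \frac{\lambda''(0)\beta^2}{\pi}$, which is exactly why the threshold is $|\beta| = \sqrt{\pi/\lambda''(0)}$.

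For the quantitative bound below the threshold, I would expand the exponential and interchange sum and expectation:
\begin{align*}
Q\left[W_N^2(\beta_N)\right] = \sum_{k=0}^\infty \frac{\gamma_N^k}{k!}\, P_S^{\otimes 2}\left[\left(\sum_{i=1}^N \mathbbm{1}_{\{S_i=S_i'\}}\right)^k\right],
\end{align*}
and control the moments of the collision count. The key estimate is a Kac-moment bound: $P_S^{\otimes 2}\big[(\sum_{i=1}^N \mathbbm{1}_{\{S_i=S_i'\}})^k\big] \le k!\, G_N^k$ where $G_N = \sum_{i=1}^N P(\Delta_i=0) \le \tfrac1\pi \log N + C$. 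This is the standard "renewal/subadditivity" argument: reordering the $k$ collision times and using the Markov property bounds the $k$-th moment by $k!$ times the $k$-th power of the one-collision Green's function. Substituting gives $Q[W_N^2(\beta_N)] \le \sum_k (\gamma_N G_N)^k = (1-\gamma_N G_N)^{-1}$ whenever $\gamma_N G_N < 1$. Since $\gamma_N G_N \to \frac{\lambda''(0)\beta^2}{\pi} < 1$ under the hypothesis $|\beta| < \sqrt{\pi/\lambda''(0)}$, one gets $\sup_N Q[W_N^2(\beta_N)] < \infty$ uniformly. The strict positivity $\sup_N Q[W_N^2(\beta_N)] - 1 > 0$ is immediate: already $N=1$ gives $Q[W_1^2(\beta_N)] = P_S^{\otimes 2}[e^{\gamma_N \mathbbm{1}_{\{S_1=S_1'\}}}] > 1$ because $\gamma_N > 0$ (strict convexity of $\lambda$) and the two walks collide at time $1$ with positive probability.

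For the divergence statement above the threshold, I would instead bound from below: keep only the diagonal term or use Jensen, or more simply restrict the collision count to a lower bound. One clean route: by convexity (Jensen applied to the law of $\sum \mathbbm{1}_{\{S_i=S_i'\}}$), $Q[W_N^2(\beta_N)] \ge \exp(\gamma_N G_N)$; but this only grows like a power of $N$, not fast enough to beat the $\tfrac1N$. So instead I would use a second-moment/Paley--Zygmund-type lower bound on the collision count: the collision count has mean $\asymp \log N$ and, conditionally on the walks meeting early, the walk has a constant chance of accumulating order $\log N$ collisions in a way that makes $P^{\otimes2}_S[\exp(\gamma_N \sum \mathbbm{1})] \ge N^{c}$ for some $c>0$ when $\gamma_N G_N > 1$; more carefully, one shows $Q[W_N^2(\beta_N)] \ge c\, N^{\,\gamma_N G_N - 1 - o(1)}$, and since $\gamma_N G_N \to \frac{\lambda''(0)\beta^2}{\pi} > 1$, the ratio $\tfrac1N Q[W_N^2(\beta_N)]$ grows polynomially, so the sup over $N$ is $+\infty$. \textbf{The main obstacle} is this last lower bound: getting a matching polynomial \emph{lower} bound on the exponential moment of the collision local time requires more than the Kac upper bound — one needs either a precise large-deviation/Tauberian analysis of the collision count (showing its generating function genuinely has the renewal-type singularity $(1-\gamma_N G_N)^{-1}$ with the right power of $N$ emerging when one truncates the Green's function at scale $N$) or a direct construction of a favorable event of walks colliding $\Theta(\log N)$ times with probability $N^{-\Theta(1)}$. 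The upper-bound direction, by contrast, is routine given the Green's function asymptotics and the Kac moment inequality.
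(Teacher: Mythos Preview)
Your treatment of the subcritical case ($|\beta|<\sqrt{\pi/\lambda''(0)}$) is correct and essentially the paper's argument: the paper expands the product $\prod_{i=1}^N\bigl(1+(e^{\gamma_N}-1)\mathbbm{1}\{S_i=S_i'\}\bigr)$ instead of Taylor-expanding the exponential, but this is the same renewal/Kac computation and yields the identical bound $(1-\gamma_N G_N)^{-1}$ with $\gamma_N G_N\to\lambda''(0)\beta^2/\pi<1$. The positivity is also fine.

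The supercritical part, however, has a genuine gap. You claim $Q[W_N^2(\beta_N)]\ge c\,N^{\gamma_N G_N-1-o(1)}$ and then assert that $\tfrac1N Q[W_N^2(\beta_N)]$ ``grows polynomially''. But $\gamma_N G_N\to \rho:=\lambda''(0)\beta^2/\pi$, so even granting your bound you would only get $\tfrac1N Q[W_N^2]\ge c\,N^{\rho-2-o(1)}$, which \emph{decays} whenever $1<\rho\le 2$, i.e.\ for all $\sqrt{\pi/\lambda''(0)}<|\beta|\le\sqrt{2\pi/\lambda''(0)}$. A merely polynomial lower bound on the second moment cannot establish the statement across the full supercritical range; you need superpolynomial growth.

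The paper obtains this with a short combinatorial trick inside the same product expansion. Keep only the single term $k=(\log N)^2$ in
\[
Q[W_N^2(\beta_N)]-1=\sum_{k\ge 1}\Lambda_{1,N}^k\sum_{1\le i_1<\cdots<i_k\le N}\prod_{j=1}^k P_S(S_{2(i_j-i_{j-1})}=0),
\]
and restrict each gap $i_j-i_{j-1}$ to lie in $[1,N/(2k)]$; this forces $i_k\le N$ and factorises the inner sum, giving
\[
Q[W_N^2(\beta_N)]-1\ \ge\ \Bigl(\Lambda_{1,N}\sum_{i=1}^{N/(2(\log N)^2)}P_S(S_{2i}=0)\Bigr)^{(\log N)^2}.
\]
Since $\log\bigl(N/(2(\log N)^2)\bigr)\sim\log N$, the bracket converges to $\rho>1$, so the right-hand side is $\ge c^{(\log N)^2}$ for some $c>1$, which dominates any power of $N$. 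This is the missing idea: choose $k$ growing faster than $\log N$ (so that the number of factors explodes) but slowly enough (any $k=o(N^\varepsilon)$ works) that the truncated Green's function $\sum_{i\le N/k}P_S(S_{2i}=0)$ still behaves like $\tfrac1\pi\log N$.
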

\begin{proof}
We obtain by simple calculation  that 
\begin{align*}
Q\left[W_N^2(\beta_N)\right]&=P_{S,S'}\left[	\exp\left(\left(\lambda(2\beta_N)-2\lambda(\beta_N)\right)\sharp\{0<i\leq N:S_i=S_i'\}\right)	\right],
\end{align*}
where $P_{S,S'}$ is the probability measure of two independent simple random walks. Also, it is represented as 
\begin{align*}
&P_{S,S'}\left[	\prod_{i=1}^N\left(	1+1\{S_i=S_i'\}(\exp(\lambda(2\beta_N)-2\lambda(\beta_N))-1)	\right)	\right]\\
&=\sum_{k=0}^{N}(\exp(\lambda(2\beta_N)-2\lambda(\beta_N))-1)^k\sum_{1\leq i_{1}<\cdots<i_k\leq N}P_{S,S'}(S_{i_j}=S_{i_j}'\ \text{for all }1\leq j\leq k)\\
&=\sum_{k=0}^{N}(\exp(\lambda(2\beta_N)-2\lambda(\beta_N))-1)^k\sum_{1\leq i_{1}<\cdots<i_k\leq N}P_{S}(S_{2i_j}=0'\ \text{for all }1\leq j\leq k)\\
&=\sum_{k=0}^N(\exp(\lambda(2\beta_N)-2\lambda(\beta_N))-1)^k\sum_{1\leq i_{1}<\cdots<i_k\leq N}\prod_{j=1}^kP_S(S_{2i_j-2i_{j-1}}=0).
\end{align*} 
When $N$ is large enough, $\Lambda_{1,N}(\beta)=\exp(\lambda(2\beta_N)-2\lambda(\beta_N))-1=\frac{\lambda ''(0)\beta^2}{\log N}(1+o(1))$.
The local limit theorem implies  that \begin{align*}
Q\left[W_N^2(\beta_N)\right]&\leq \sum_{k=0}^N\left(	\frac{\lambda''(0)\beta^2}{\log N}(1+o(1))	\right)^k\left(\sum_{i=1}^N\frac{1}{i\pi}\right)^k\\
&\leq \frac{1}{1-\frac{\lambda''(0)\beta^2}{\pi}}(1+o(1)).
\end{align*}
Also, we have that \begin{align*}
Q\left[W_N^2(\beta_N)\right]&\geq 1+\frac{\lambda''(0)\beta^2}{\log N}\sum_{i=1}^NP_S(S_{2i}=0)\\
&\geq 1+c\lambda''(0)\beta^2,
\end{align*}
for some $c>0$.

On the other hand, we have for $|\beta|>\sqrt{\frac{\pi}{\lambda''(0)}}$ that \begin{align}
Q\left[W_N^2(\beta_N)\right]-1&=\sum_{k=1}^N\Lambda_{1,N}^k\sum_{1\leq i_{1}<\cdots<i_k\leq N}\prod_{j=1}^kP_S(S_{2i_j-2i_{j-1}}=0)\notag\\
&\geq \Lambda_{1,N}(\beta)^{(\log N)^2}\sum_{1\leq i_{1}<\cdots<i_{(\log N)^2}\leq N}\prod_{j=1}^{(\log N)^2}P_S(S_{2i_j-2i_{j-1}}=0)\notag\\
&\geq \Lambda_{1,N}(\beta)^{(\log N)^2}\left(\sum_{i=1}^{\frac{N}{2(\log N)^2}}P_S(S_{2i}=0)\right)^{(\log N)^2}.\label{lower}
\end{align}

As $N\to \infty$, we have that \begin{align*}
\Lambda_{1,N}(\beta)\left(\sum_{i=1}^{\frac{N}{2(\log N)^2}}P_S(S_{2i}=0)\right)\to \frac{\lambda''(0)\beta^2}{\pi}>1.
\end{align*}
Thus, the right hand side of (\ref{lower}) is bounded from below by $c^{(\log N)^2}$ for some $c>1$ and we complete the proof.

\end{proof}

Thus, we know that the subsequence of $W_N(\beta_N)$ weakly converges to a random variable $W$ and $Q[W]=1$. Then, we find that \begin{align*} 
Q[\log W_N(\beta_N)]``\approx" Q[\log W]<\log Q[ W]=0.
\end{align*}
Therefore, we can conjecture that as $\beta'=\beta_N=\frac{c}{\sqrt{\log N}}\to 0 $ \begin{align*}
\lim_{N\to \infty}\exp\left(\frac{c^2}{\beta'^2}\right)p(\beta')=\lim_{N\to \infty}Q[\log W_N(\beta')]``=" Q[\log W]\leq-{c'}.
\end{align*}
Of course, this approximation is non-rigorous and meaningless. But, we could feel the flavor of the reason for our conjecture of the asymptotics of $p(\beta)$.

\section{Proof of Theorem \ref{main}}

We may assume $\beta>0$.

Now, we consider the partition function $W_{nN}(\beta)$. Let $B_y^{N}$ ($y=(y_1,y_2)\in \mathbb{Z}^2$) be a square in $\mathbb{Z}^2$ defined by\begin{align*}
B_y^N =\{x\in\mathbb{Z}^2:x_i\in [(2y_i-1)\lfloor \sqrt{N}\rfloor +y_i,(2y_i+1)\lfloor \sqrt{N}\rfloor+y_i],i=1,2 \}.
\end{align*}
Then, $B_y^N$ are disjoint and cover $\mathbb{Z}^2$.  


We have by Jensen's inequality that for $\theta\in (0,1)$\begin{align}
\frac{1}{nN}Q\left[\log W_{nN}(\beta)\right]=\frac{1}{nN\theta}Q\left[\log W_{nN}(\beta)^\theta\right]\leq \frac{1}{nN\theta}\log Q\left[W_{nN}(\beta)^\theta\right].\label{anupper}
\end{align}

We will  show that  for some $\beta=Ch(N) $, there exists a $c>0$ such that \begin{align}
Q\left[W_{nN}(\beta)^\theta\right]\leq O(e^{-cn}),\label{expdecay}
\end{align}
where $h(N)\to 0$ as $N\to \infty$.
Taking $n\to\infty$ in (\ref{anupper}), we have that \begin{align*}
p(\beta)\leq -\frac{c}{N\theta}=-\frac{c}{\theta h^{-1}(\beta/C)}.
\end{align*}
In this paper, we take $h(N)=(\log N)^{-\frac{q-1}{2q}}$ for $q\geq 2$, that is $h^{-1}(\beta)=\exp\left(\beta^{-\frac{2q}{q-1}}\right)$. We devote the rest of this section to prove (\ref{expdecay}).

\vspace{1em}

\underline{\textit{Coarse graining}}

We decompose the partition function $Q\left[W_{nN}^\theta(\beta)\right]$ for $n,N\in\mathbb{N}$ into the box.  We have that 
\begin{align*}
W_{nN}=\sum_Z\hat{W}_{z_1,\cdots,z_n},
\end{align*}
where for $Z=(z_1,\cdots,z_n)\in \left(\mathbb{Z}^2\right)^n$, \begin{align*}
\hat{W}_{z_1,\cdots,z_n}=\hat{W}_Z&=P_S\left[\exp\left(\beta H_{nN}(S)-nN\lambda(\beta)\right):		S_{jN}\in {B}_{z_j}^{N},\ j=1,\cdots, n	\right]\\
&=P_S\left[\prod_{i=1}^{nN}	e_{i,S_i}(\beta):S_{jN}\in {B}_{z_j}^{N},\ j=1,\cdots, n\right],
\end{align*}
where $e_{i,x}(\beta)=\exp(\beta \eta(i,x)-\lambda(\beta))$.

Then, we have  that \begin{align}
Q\left[W_{nN}^\theta\right]\leq \sum_{Z}Q\left[	 \hat{W}^\theta_{z_1,\cdots,z_n}	\right],\label{decom}
\end{align}
where we have used the inequality $\dis \left(\sum_{i=1}^\n a_i\right)^\theta \leq \sum_{i=1}^na_i^\theta$ for $a_i\geq 0$ and $\theta\in (0,1)$.
Therefore, it is sufficient to estimate $Q\left[\hat{W}^\theta_Z\right]$. The technique is the same as the one in \cite{Ber,Lac}, change of measure.

For $Z=(z_1,\cdots,z_m)\in\left(\mathbb{Z}^2\right)^m$, we introduce new random variables: For any $q\geq 2$\begin{align*} 
A_{\ell}^{q,N}&=\frac{\sqrt{\log N}}{N}\sum_{x\in B_{z_\ell}^{N}}\left(P^x_S\left[\sum_{1\leq j_i<\cdots<j_q\leq N}\prod_{i=1}^q\Big(\exp\big(	\gamma_N\eta({j_i}+\ell N,S_{j_i})-\lambda(\gamma _N)	\big)-1\Big)\right]\right).
\end{align*}
Here, $\gamma_N=\frac{\gamma}{\sqrt{\log N}}$ for $0<\gamma<\sqrt{\frac{\pi}{\lambda''(0)}}$.


We will look at some properties of $A_{\ell}^{q,N}$.

\begin{lem}\label{corre}
\begin{align*}
Q\left[	\left(A_{\ell}^{q,N}\right)^2	\right]\leq {C^1_{\gamma,\lambda,q}}.
\end{align*}

\end{lem}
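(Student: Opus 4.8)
The plan is to expand the square and reduce everything to a sum over simple-random-walk intersection probabilities, exactly as in the second-moment computation of Lemma 2.2. First I would observe that by translation invariance of the environment and of the walk increments we may take $\ell = 0$ and $z_0 = 0$, so $B_{z_0}^N$ is a box of side $\approx 2\sqrt N$ centered at the origin. Writing $A := A_0^{q,N}$, we have
\begin{align*}
A = \frac{\sqrt{\log N}}{N}\sum_{x\in B_0^N} P_S^x\!\left[\sum_{1\le j_1<\cdots<j_q\le N}\prod_{i=1}^q\big(e_{j_i,S_{j_i}}(\gamma_N)-1\big)\right],
\end{align*}
with $e_{j,y}(\gamma_N)=\exp(\gamma_N\eta(j,y)-\lambda(\gamma_N))$. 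Squaring and using Fubini, $Q[A^2]$ becomes a double sum over starting points $x,x'\in B_0^N$ and over two increasing $q$-tuples $\vec{j},\vec{j'}$ of the quantity
\begin{align*}
P_{S,S'}^{x,x'}\left[\,Q\!\left[\prod_{i=1}^q(e_{j_i,S_{j_i}}(\gamma_N)-1)\prod_{i=1}^q(e_{j'_i,S'_{j'_i}}(\gamma_N)-1)\right]\right],
\end{align*}
with $S,S'$ independent walks. The key algebraic point is that the centered variables $e_{j,y}(\gamma_N)-1$ have mean zero under $Q$ and are independent across distinct space-time sites $(j,y)$; hence the $Q$-expectation vanishes unless the time-space points $\{(j_i,S_{j_i})\}$ and $\{(j'_i,S'_{j'_i})\}$ pair up perfectly. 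Since both $\vec j$ and $\vec{j'}$ are strictly increasing $q$-tuples, a nonzero contribution forces $j_i = j'_i$ for all $i$ and $S_{j_i}=S'_{j_i}$ for all $i$, and on that event the $Q$-expectation equals $\prod_{i=1}^q(e^{\lambda(2\gamma_N)-2\lambda(\gamma_N)}-1) = \Lambda_{1,N}(\gamma)^q$, reusing the notation and the asymptotics $\Lambda_{1,N}(\gamma) = \frac{\lambda''(0)\gamma^2}{\log N}(1+o(1))$ from Lemma 2.2.

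Next I would carry out the resulting combinatorial bound. After the pairing reduction, $Q[A^2]$ is at most
\begin{align*}
\frac{\log N}{N^2}\sum_{x,x'\in B_0^N}\ \sum_{1\le j_1<\cdots<j_q\le N}\Lambda_{1,N}(\gamma)^q\, P_{S,S'}^{x,x'}\big(S_{j_i}=S'_{j_i},\ i=1,\dots,q\big).
\end{align*}
Summing the walk probability over the $x'$-starting point in $B_0^N$ is bounded by summing over all of $\mathbb{Z}^2$, which just removes one constraint and leaves $\frac{1}{N}\sum_{x'} \le 1$ after normalization; more precisely, fixing $x$ and using $\sum_{x'\in\mathbb{Z}^2}P_{S'}^{x'}(S'_{j_1}=S_{j_1}) = 1$, and then telescoping the remaining constraints $S_{j_i}=S'_{j_i}$ into the product form $\prod_{i=1}^q P_S(S_{2(j_i-j_{i-1})}=0)$ exactly as in the display chain of Lemma 2.2 (with $j_0:=0$). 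One then also sums freely over $x\in B_0^N$, contributing a factor $|B_0^N|\approx 4N$, which together with the prefactor $\frac{\log N}{N^2}\cdot|B_0^N| \approx \frac{4\log N}{N}$ and a further free sum over $j_1\in\{1,\dots,N\}$ must be tracked carefully; the net effect is that one factor of the walk-return sum is "used up" converting the box-average into an $O(1)$ constant, and the remaining $q-1$ factors each contribute, via the local limit theorem, $\sum_{i\ge 1}\frac{1}{i\pi}(1+o(1)) \le \frac{\log N}{\pi}(1+o(1))$. Combining, $Q[A^2]$ is bounded by a constant times $\big(\Lambda_{1,N}(\gamma)\cdot\frac{\log N}{\pi}\big)^{q-1}\cdot(1+o(1)) \to \big(\frac{\lambda''(0)\gamma^2}{\pi}\big)^{q-1}$, which is finite and, crucially, uniform in $N$ since $\gamma<\sqrt{\pi/\lambda''(0)}$. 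This yields $Q[A^2]\le C^1_{\gamma,\lambda,q}$.

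The main obstacle I expect is the bookkeeping in the reduction from the box-averaged double sum to the clean product of return probabilities: one has to check that the prefactor $\frac{\sqrt{\log N}}{N}$ per copy of $A$ precisely compensates the factor $|B_0^N|\approx 4N$ from summing the free starting points and the length-$N$ sum over the first time index, so that exactly one power of $\sum_i \frac{1}{i\pi}\approx \frac{\log N}{\pi}$ is absorbed and only $q-1$ remain to be matched against the $q$ powers of $\Lambda_{1,N}(\gamma) = O(1/\log N)$. A secondary technical point is handling the boundary: the walk started at $x\in B_0^N$ need not stay in $B_0^N$, but since we only ever upper-bound walk probabilities by summing over all of $\mathbb{Z}^2$, the box constraint is simply dropped and causes no difficulty. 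Once the power counting is pinned down, the uniform-in-$N$ finiteness is immediate from the subcriticality condition $\lambda''(0)\gamma^2<\pi$, exactly as in Lemma 2.2.
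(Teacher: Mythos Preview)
Your proposal is correct and follows essentially the same route as the paper: expand the square, use independence of the centered variables $e_{j,y}(\gamma_N)-1$ to force the two $q$-tuples to coincide and produce the factor $\Lambda_{1,N}(\gamma)^q$, then control the box average by summing one starting point over all of $\mathbb{Z}^2$ (absorbing one intersection constraint) and bound the remaining $q-1$ return sums by the local limit theorem. One small remark: the subcriticality condition $\gamma<\sqrt{\pi/\lambda''(0)}$ is not actually needed here---for fixed $q$ the limit $(\lambda''(0)\gamma^2/\pi)^{q-1}$ is finite for any $\gamma$, so the bound $C^1_{\gamma,\lambda,q}$ holds regardless; that condition only matters when one sums over all $q$ as in Lemma~2.1.
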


\begin{proof}
 It is easy to see that 
 \begin{align*}
&Q\left[	(A_{\ell}^{q,N})^2\right]=\frac{\log N}{N^2}\left(\sum_{x,x'\in B_{z_\ell}^N}\sum_{1\leq j_i<\cdots<j_q\leq N}\Lambda_{1,N}(\gamma)^qP_{S,S'}^{x,x'}\left[S_{j_i}=S_{j_i}',\ i=1,\cdots, q\right]\right).
\end{align*}
Thus, we have that \begin{align*}
Q\left[	(A_{\ell}^{q,N})^2\right]&\leq \frac{\log N}{N^2}\left(\frac{\lambda''(0)\gamma^2(1+o(1))}{\log N}\right)^q\sum_{x,x' \in B_{z_\ell}^{N}}\sum_{i=1}^NP_{S,S'}^{x,x'}\left(	S_{i}=S_{i}'\right)\left(\sum_{i=1}^NP_{S,S'}^{0,0}(S_i=S_i')\right)^{q-1}.
\end{align*}
Since it holds that \begin{align*}
\frac{1}{N^2}\sum_{x,x' \in B_{z_\ell}^{N}}\sum_{i=1}^NP_{S,S'}^{x,x'}\left(	S_{i}=S_{i}'\right)
\leq \frac{1}{N^2}\sum_{i=1}^N \sum_{x\in B_{z_\ell}^{N}}1\leq 1,
\end{align*}
local limit theorem implies the statement.

\end{proof}

Let $K$ be a large constant. We define the function $f_K$ on $\mathbb{R}$ by \begin{align*}
f_K(x)=-K1\{x>\exp(K^2)\}.
\end{align*}

We set the function of the environment by \begin{align}
g_Z(\eta,n,q,N)=\exp\left(		\sum_{k=0}^{n-1}f_K\left(A_{k}^{q,N}\right)		\right).
\end{align}
H\"older's inequality impies that \begin{align}
Q\left[		\hat{W}_Z^\theta	\right]&=Q\left[	(g_Z(\eta,n,q,N))^{-\theta} (g_Z(\eta,n,q,N)\hat{W}_Z)^\theta	\right]\notag\\
&\leq Q\left[g_Z(\eta,n,q,N)^{-\frac{\theta}{1-\theta}}\right]^{1-\theta}Q\left[g_Z(\eta,n,q,N)\hat{W}_Z\right]^\theta.\label{holder}
\end{align}
Then, the first term can be expressed as \begin{align*}
Q\left[g_Z(\eta,n,q,N)^{-\frac{\theta}{1-\theta}}\right]=\prod_{k=0}^{n-1}Q\left[	\exp\left(-\frac{\theta}{1-\theta}f_K\left(A_{k}^{q,N}\right)\right)	\right].
\end{align*}


Lemma \ref{corre} yields that \begin{align*}
Q\left[A_{k}^{q,N}>\exp(K^2)\right]\leq C_{\gamma,\lambda,q}^1\exp(-2K^2)
\end{align*}
and hence \begin{align}
Q\left[g_Z(\eta,n,q,N)^{-\frac{\theta}{1-\theta}}\right]&=\prod_{k=0}^{n-1}Q\left[	\exp\left(-\frac{\theta}{1-\theta}f_K\left(A_{k}^{q,N}\right)\right)	\right]\notag\\
&\leq \left(1+	C_{\gamma,\lambda,q}^1\exp\left(\frac{\theta}{1-\theta}K-2K^2		\right)		\right)^{n}\leq 2^{n}.\label{2m}
\end{align}
for large $K$.

Thus, it is enough to estimate \begin{align*}
&Q\left[g_Z(\eta,n,q,N)\hat{W}_Z\right]\\
&=Q\left[P_S\left[g_Z(\eta,n,q,N)\prod_{i=1}^{nN}e_{i,,S_i}(\beta):  	S_{jN}\in {B}_{z_j}^{N}, \ j=1,\cdots,n	\right]\right]. 
\end{align*}
For fixed trajectory of the random walk $S$, we define a new measure of environment ${Q}_S$ by \begin{align*}
\frac{dQ_S}{dQ}=\prod_{i=1}^{nN}e_{i,S_i}(\beta).
\end{align*}
Then, we have by Fubini's theorem that \begin{align}
&Q\left[P_S\left[g_Z(\eta,n,q,N)\hat{W}_Z\right]\right]=P_S\left[Q_S\left[	g_Z(\eta,n,q,N)		\right]:S_{jN}\in {B}_{z_j}^{N}, \ j=1,\cdots,n	\right]\notag\\
&\leq \prod_{j=1}^n \max_{x\in {B}_{z_{j-1}}^{N}}P_S^x\left[Q_S\left[	\exp\left(f_K\left(A_{j}^{q,N}\right)\right)		\right]:S_{N}\in {B}_{z_j}^{N}\right].\label{max}
\end{align}

Combining (\ref{decom}), (\ref{holder}), (\ref{2m}), and (\ref{max}), \begin{align*}
Q\left[W_{nN}^\theta\right]\leq 2^{m(1-\theta)}\left(\sum_{z\in\mathbb{Z}^2} \max_{x\in {B}_{0}^{N}}P_S^x\left[Q_S\left[	\exp\left(f_K\left(A_{0}^{q,N}\right)\right)		\right]:S_{N}\in {B}_{z}^{N}\right]^\theta\right)^m.
\end{align*}
It is enough to show that \begin{align*}
\sum_{z\in\mathbb{Z}^2} \max_{x\in {B}_{0}^{N}}P_S^x\left[Q_S\left[	\exp\left(f_K\left(A_{0}^{q,N}\right)\right)		\right]:S_{N}\in {B}_{z}^{N}\right]^\theta
\end{align*}
is small.

Since $P_S\left(S_{N}\in {B}_z^{N}\right)$ decays as  $e^{-\frac{|z|^2}{4}}$ as $|z|\to \infty$, for any $\ve>0$ there exists $R>0$ such that \begin{align*}
&\sum_{|z|>R} \max_{x\in {B}_0^{N}}P_S^x\left[Q_S\left[\exp\left(f_K\left(A_0^{q,N}\right)\right)\right]:S_{N}\in {B}_{z}^{N}\right]^\theta\\
&\leq \sum_{|z|>R} \max_{x\in {B}_0^{N}}P_S^x\left[S_{N}\in {B}_{z}^{N}\right]^\theta<\ve.
\end{align*}

In the rest of this section, we will show that \begin{align*}
&\sum_{|z|\leq R}\max_{x\in {B}_0^{N}}P_S^x\left[Q_S\left[\exp\left(f_K\left(A_0^{q,N} \right)\right)\right]:S_{N}\in {B}^{N}_{z}\right]^\theta\\
&\leq 4R^2 \max_{x\in {B}_0^{N}}P_S^x\left[Q_S\left[\exp\left(f_K\left(A_0^{q,N}\right)\right)\right]\right]^\theta
\end{align*}
is small by taking $N\geq \left\lfloor  \exp\left(\left(\frac{C_1}{\beta}\right)^{\frac{2q}{q-1}}\right) \right\rfloor  $ for  $C_1$ large enough. 

\begin{prop}\label{moment}
For any $\ve>0$, there exists $\delta>0$ such that \begin{align*}
\min_{x\in B_0^N}P_{S}^x\left(	X>\delta \left({\log N}	\right)^{q-1}	\right)>1-\ve,
\end{align*}
where \begin{align*}
X=\frac{1}{N}\sum_{y\in B_{0}^N}\sum_{0=j_0<\cdots <j_q\leq N}\prod_{i=0}^{q-1}p_{j_{i+1}-j_i}\left(S_{j_i},S_{j_{i+1}}\right)
\end{align*}
\end{prop}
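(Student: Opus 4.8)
The plan is a second–moment argument for $X$ under $P_S^x$. It will in fact give the stronger conclusion that $X\geq c_q(\log N)^q$ with $P_S^x$-probability tending to $1$ uniformly in $x\in B_0^N$, from which the claimed statement follows (for $N$ large, which is the relevant regime) with $\delta=c_q/2$.

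\emph{First moment.} Writing $\Delta_i=j_{i+1}-j_i$ and conditioning successively on $S_{j_1},\dots,S_{j_{q-1}}$, the identity $\sum_v p_m(u,v)^2=p_{2m}(0,0)$ gives that $P_S^x\left[\prod_{i=0}^{q-1}p_{\Delta_i}(S_{j_i},S_{j_{i+1}})\right]=\prod_{i=0}^{q-1}p_{2\Delta_i}(0,0)$, which does not depend on $x$. Using $p_{2m}(0,0)\sim\frac1{\pi m}$ (local limit theorem, as already used in Lemma \ref{square}) and $|B_0^N|/N\to 4$, one obtains $P_S^x[X]\asymp\sum_{\Delta_i\geq 1,\ \sum_i\Delta_i\leq N}\prod_i\frac1{\Delta_i}\asymp(\log N)^q$, uniformly in $x\in B_0^N$.

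\emph{Second moment.} This is the heart of the matter. Expand $P_S^x[X^2]$ as a double sum over two increasing tuples $(j_i)_{i=0}^{q}$ and $(j'_i)_{i=0}^{q}$, and evaluate the expectation of $\prod_i p_{\Delta_i}(S_{j_i},S_{j_{i+1}})\prod_i p_{\Delta'_i}(S_{j'_i},S_{j'_{i+1}})$ via the joint local central limit theorem for the walk at the (at most $2q$) times $\{j_i\}\cup\{j'_i\}$; the estimates needed are $\sum_v p_m(u,v)^2\asymp\frac1m$ and the off-diagonal bound $\sum_v p_a(u,v)p_b(x,v)\asymp\frac1{a+b}e^{-c|u-x|^2/(a+b)}$. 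Organizing the sum by the interleaving pattern of the merged time sequence, the pairs in which the two tuples are non-interacting reconstruct $(P_S^x[X])^2$ up to a factor $1+o(1)$, whereas any pattern that forces a coincidence between the two tuples replaces a free logarithmic sum $\sum_m\frac1m\asymp\log N$ by one smaller by a factor $O(1/\log N)$ — the usual two-dimensional effect that the walk rarely returns near a prescribed point. This should yield $P_S^x[X^2]=(1+o(1))(P_S^x[X])^2$, i.e. $\Var_{P_S^x}(X)=o((\log N)^{2q})$ uniformly in $x$ (being an upper bound, the uniformity is automatic; for $q=1$ it reduces to $\sum_{1\leq j<j'\leq N}\frac1{j(2j'-j)}\sim\frac14(\log N)^2$, whence $P_S^x[X^2]\sim(P_S^x[X])^2$). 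Carrying out this bookkeeping carefully — controlling the interleaving combinatorics and confirming that the variance is of strictly smaller order than $(\log N)^{2q}$ — is the main obstacle; if it turned out that $\Var(X)\asymp(\log N)^{2q}$, one would instead have to split $X$ into dyadically scaled blocks and run a Paley–Zygmund/Chebyshev argument exploiting the near-independence of well-separated blocks.

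\emph{Conclusion.} By Chebyshev's inequality, $P_S^x\!\left(X<\tfrac12 P_S^x[X]\right)\leq\Var_{P_S^x}(X)/\left(\tfrac12 P_S^x[X]\right)^2=o(1)$ uniformly in $x\in B_0^N$, hence is $<\ve$ for $N$ large; on the complementary event, $X\geq\tfrac12 P_S^x[X]\geq c_q(\log N)^q\geq c_q(\log N)^{q-1}$. Finally, should the definition of $X$ carry a constraint keeping the walk in $B_0^N$ (say $S_{j_q}\in B_0^N$), the order of the first-moment lower bound is unchanged, since the $(\log N)^q$ growth already comes from the range $j_q\leq N/2$, on which $P_S^x(S_{j_q}\in B_0^N)$ is bounded below uniformly over $x\in B_0^N$; the variance estimate, being an upper bound, is unaffected, and the argument goes through verbatim.
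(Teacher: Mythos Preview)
Your approach via a direct second-moment estimate for $X$ is genuinely different from the paper's, and while plausible in outline, you explicitly leave the key variance bound as ``the main obstacle'' without carrying it out. The paper's proof avoids that computation altogether by a first-moment trick with a bounded auxiliary variable.

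The paper introduces
\[
L=\frac{1}{N}\sum_{1\leq j_1<\cdots<j_q\leq N}\Bigl(\prod_{i=1}^{q-1}\frac{1\{|S_{j_{i+1}}-S_{j_i}|\leq C_2\sqrt{j_{i+1}-j_i}\}}{j_{i+1}-j_i}\Bigr)\,1\{|S_{j_1}-x|\leq C_2\sqrt{j_1}\},
\]
which satisfies $0\leq L\leq D_N^q$ \emph{deterministically}, where $D_N^q=\frac{1}{N}\sum_{1\leq j_1<\cdots<j_q\leq N}\prod_{i=1}^{q-1}(j_{i+1}-j_i)^{-1}\asymp(\log N)^{q-1}$. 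Choosing $C_2$ large makes each indicator have $P_S^x$-mean arbitrarily close to $1$, so $P_S^x[L]\geq(1-\ve')D_N^q$; since $L\leq D_N^q$ almost surely, Markov's inequality applied to $D_N^q-L$ immediately gives $P_S^x(L\geq D_N^q/2)\geq 1-2\ve'$ with no second moment needed. The local limit theorem then turns each indicator into a lower bound on the corresponding transition kernel (and the sum over $y\in B_0^N$ into a positive constant on the event $\{|S_{j_1}-x|\leq C_2\sqrt{j_1}\}$), yielding $X\geq c\,L$ pointwise.

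What this buys is precisely the step you flag as difficult: concentration comes for free from the deterministic upper bound, so no interleaving combinatorics and no fallback Paley--Zygmund argument are required. A side remark on your first moment: you read $S_{j_0}$ as the fixed starting point $x$, whereas in context the first kernel is $p_{j_1}(y,S_{j_1})$ with $y$ the summation variable; then $\sum_{y\in B_0^N}p_{j_1}(y,S_{j_1})=O(1)$ rather than $\asymp 1/j_1$, which is why the correct order of $P_S^x[X]$ is $(\log N)^{q-1}$ and not $(\log N)^q$.
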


\begin{proof}

Let \begin{align*}
L&=\frac{1}{N}\sum_{1\leq j_1<\cdots<j_q\leq N}\left(\prod_{i=1}^{q-1}\frac{1\{|S_{j_i+1}-S_{j_{i}}|\leq C_2\sqrt{j_{i+1}-j_{i}}\}}{j_{i+1}-j_{i}}\right)1\{|S_{j_1}-x|\leq C_2\sqrt{j_1}\}
\intertext{and}
D_N^q&=\frac{1}{N}\sum_{1\leq j_1<\cdots<j_q\leq N}\prod_{i=1}^{q-1}\frac{1}{|j_{i+1}-j_{i}|}.
\end{align*}

We take $C_2$ large enough  such that \begin{align*}
P^x_S[L]>\left(1-\frac{\ve}{4R^2}\right)D_N^q.
\end{align*}
Since $0\leq L\leq D^q_N$ almost surely, we have \begin{align}
P^x_S\left(L\geq \frac{D^q_N}{2}\right)\geq 1-\frac{\ve}{2R^2}. \label{Xd}
\end{align}
We remark that \begin{align*}
D_N^q\geq c(\log N)^{q-1}.
\end{align*}

Local limit theorem implies that \begin{align*}
X\geq C_{C_2,q}L.
\end{align*}
where we have used the fact that on the event $\{|S_j-x|\leq C_2\sqrt{j}\}$, there exists $\delta'>0$ such that \begin{align*}
\sum_{y\in B_0^N}p(y,S_j)\geq \delta'\ \ \text{for }1\leq j\leq N .
\end{align*}

\end{proof}

Now, we estimate $A_{0}^{q,N}$ under $Q_S$.

It is easy to see that \begin{align*}
Q_{S}\left[	A_0^{q,N}		\right]=\sqrt{\log N}\left(\exp\left(	\lambda(\gamma_N+\beta)-\lambda(\gamma_N)-\lambda(\beta)	\right)-1\right)^qX
\end{align*}
When $N= \left\lfloor  \exp\left(\left(\frac{C_1}{\beta}\right)^{\frac{2q}{q-1}}\right) \right\rfloor  $, $\beta=\frac{C_{C_1}}{\left(\log N\right)^{\frac{q-1}{2q}}}$ and \begin{align*}
\exp(\lambda(\gamma_N+\beta)-\lambda(\gamma_N)-\lambda(\beta))-1\asymp \gamma_N\beta.
\end{align*}
Thus, we have that \begin{align}
\sqrt{\log N}\left(\exp\left(	\lambda(\gamma_N+\beta)-\lambda(\gamma_N)-\lambda(\beta)	\right)-1\right)^qD_N^q\geq  {C_{C_1,\lambda,\gamma,q}}.\label{1}
\end{align}
We remark that $C_{C_1,\lambda,\gamma,q}$ tends to infinity as $C_1 \to \infty$. 
Therefore, we have from Proposition \ref{moment} that\begin{align*}
\max_{x\in B_0^N}P_S^x\left(	Q_S[A_0^{q,N}]<\delta C_{C_2,q}C_{C_1,\lambda,\gamma,q}		\right)<\frac{\ve}{R^2}.
\end{align*}

We will show that the variance $\Var_{Q_S}\left[		A_0^{q,N}	\right]$ is small enough.

We rewrite $A_{0}^{q,N}$ by \begin{align*}
A_0^{q,N}&=\frac{\sqrt{\log N}}{N}\sum_{x'\in B_0^N}P_{S'}^{x'} \left[		\sum_{1\leq j_1<\cdots<j_q\leq N}\prod_{i=1}^q	\left(	e_{j_i,S'_{j_i}}(\gamma_N)-1	\right)	\right]\\
&=	\sum_{\mathbb{J}^q}V_{\mathbb{J}^q}\prod_{i=1}^q\left(	e_{j_i,x_{i}}(\gamma_N)-1	\right),	
\end{align*}
where $(S',P_{S'}^{x'})$ is a simple random walk starting at $x'$ independent of $S$,  $\J^q=((j_1,x_1),\cdots,(j_q,x_q))$ such that $1\leq j_1<\cdots<j_q\leq N$, $\x=(x_1,\cdots,x_q)\in \left(\mathbb{Z}^2\right)^q$ and we   define \begin{align*}
V_{\J^q}&=\frac{\sqrt{\log N}}{N}\sum_{x\in B_0^{N}}p_{j_1}(x,x_1)p_{j_2-j_1}(x_1,x_2)\cdots p_{j_q-j_{q-1}}(x_q-x_{q-1})\\
&=\frac{\sqrt{\log N}}{N}\sum_{x\in B_0^{N}}\p(x,\J^q).
\end{align*}
We denote by $S(\ell)$ the set of $\J^\ell$ for $\ell\in \mathbb{N}$.

Then, we have that \begin{align*}
&A_0^{q,N}-Q_S\left[	A_0^{q,N}	\right]\\
&=\sum_{\J^q}V_{\J^q}\left(\prod_{i=1}^q		\Big(	e_{j_i,x_i}(\gamma_N)-Q_S\left[e_{j_i,x_i}(\gamma_N)		\right]+Q_S\left[e_{j_i,x_i}(\gamma_N)\right]-1	\Big)-\prod_{i=1}^qQ_S\left[		e_{j_i,x_i}(\gamma_N)-1	\right]\right).
\end{align*}
We set $Q_S\left[e_{i,y}(\gamma_N)\right]-1=m_{i,y}^{\beta,\gamma}$ and $\hat{e}_{i,y}^{\beta,\gamma}=e_{i,y}-m_{i,y}^{\beta,\gamma}-1 $ and omit the superscript $\beta,\gamma$ for convenience.  We remark that \begin{align}
m_{i,y}=\begin{cases}
0\ \ &\text{if }S_i\not= y\\
\exp(\lambda(\beta+\gamma_N)-\lambda(\beta)-\lambda(\gamma_N))-1&\text{if }S_i=y.
\end{cases}
\label{m}
\end{align}
Especially, if $S_i=y$, then $m_{i,y}\sim \lambda''(0)\gamma_N\beta$ as $N\to \infty$.

For $\J^\ell\in S(\ell)$ and $\J^{q-\ell}\in S(q-\ell)$, we denote by $\J^\ell\J^{q-\ell}\in S(q)$ the concatenation of  $\J^\ell$ and $\J^{q-\ell}$, where if some $1\leq j\leq N$ is an  element of $\J^\ell$ and $\J^{q-\ell}$, then we don't define $\J^\ell\J^{q-\ell}$.

Then, we have that \begin{align*}
A_0^{q,N}-Q_S\left[	A_0^{q,N}	\right]&=\sum_{\ell=1}^q\sum_{\J^\ell\J^{q-\ell}}V_{\J^\ell\J^{q-\ell}}\left(\prod_{(j,y)\in \J^{\ell}}\hat{e}_{j,y}	\prod_{(j,y)\in \J^{q-\ell}}	m_{j,y}	\right).
\end{align*}

It yields that \begin{align}
&\Var_{Q_S}\left[	A_0^{q,N}	\right]\notag\\
&\leq q\sum_{\ell=1}^q\sum_{\J^\ell}\sum_{\J_1^{q-\ell},\J^{q-\ell}_2}V_{\J^\ell\J_1^{q-\ell}}V_{\J^\ell\J_2^{q-\ell}}\prod_{(j,y)\in \J_1^{q-\ell}}m_{j,y}\prod_{(j,y)\in \J_2^{q-\ell}}m_{j,y}\prod_{(j,y)\in \J^\ell}Q_{S}\left[\hat{e}_{j,y}^2\right].		\label{var}
\end{align}
We remark that \begin{align*}
Q_S[\hat{e}_{j,y}^2]=\begin{cases}
\exp\left(\lambda(2\gamma_N+\beta)-2\lambda(\gamma_N)-\lambda(\beta)\right)-Q_S[\hat{e}_{j,y}]^2		&\text{if }S_j=y\\
\exp\left(	\lambda(2\gamma_N)-2\lambda(\gamma_N)		\right)-1&\text{if }S_j\not=y.
\end{cases}
\end{align*}
In particular, \begin{align}
Q_S[\hat{e}_{j,y}^2]\sim \begin{cases}
2\lambda''(0)\gamma^2_N&\text{if }S_j=y\\
\lambda''(0)\gamma_N^2&\text{if }S_j\not=y.
\end{cases}\label{e2}
\end{align}

Since it is complicated to estimate (\ref{var}), we will look at the special case  for simplicity.
Let $q=3$ and $\ell=2$. Then, $\J^2=\{(j_1,x_1),(j_2,x_2)\}$ and $\J_1^1=\{(j'_1,x_1')\}$, $\J^1_2=\{(j_1'',x_1'')\}$

Let $\J^\ell_S$ be the subset of $\J^\ell$ defined by \begin{align*}
\J^\ell_S=\{(j,y)\in \J^\ell:S_j=y\}.
\end{align*}

Then, we have that \begin{align*}
\sum_{\J^2}\sum_{\J_1^{1},\J^{1}_2}V_{\J^2\J_1^{1}}V_{\J^2\J_2^{1}}m_{j_1',x_1'}m_{j_1'',x_1''}Q_{S}\left[\hat{e}_{j_1,x_1}^2\right]Q_{S}\left[\hat{e}_{j_2,x_2}^2\right].
\end{align*}
The summations over $\J^1_1$ and $\J_2^1$ are restricted to the case $\J^1_1=\{(j_1',S_{j_1'})\}$ and $\J^1_{2}=\{(J_1'',S_{j_1''})\}$ by (\ref{m}).
When we consider the summation over $\J^2$ with $\J^2(S)=\emptyset$, we have from (\ref{e2}) that \begin{align*}
&\sum_{\J^2,\J^2(S)=\emptyset}\sum_{\J_1^{1},\J^{1}_2}V_{\J^2\J_1^{1}}V_{\J^2\J_2^{1}}m_{j_1',x_1'}m_{j_1'',x_1''}Q_{S}\left[\hat{e}_{j_1,x_1}^2\right]Q_{S}\left[\hat{e}_{j_2,x_2}^2\right]\\
&\leq C_{\lambda}^4\gamma_N^6\beta^2\frac{\log N}{N^2}\sum_{x',x''\in B_0^N}\sum_{(j_1,x_1),(j_2,x_2),(j_1',S_{j_1'}),(j_1'',S_{j''_1})}\p(x',\J^2\J^1_1)\p(x'',\J^2\J^1_2)\\
&\leq C_\lambda^4 \gamma_N^6\beta^2 (\log N)^3,
\end{align*}
where we have used local limit theorem in the last line.

When we take summation over $\J^2$ with $|\J^2(S)|=1$, we have that \begin{align*}
&\sum_{\J^2,|\J^2(S)|=1}\sum_{\J_1^{1},\J^{1}_2}V_{\J^2\J_1^{1}}V_{\J^2\J_2^{1}}m_{j_1',x_1'}m_{j_1'',x_1''}Q_{S}\left[\hat{e}_{j_1,x_1}^2\right]Q_{S}\left[\hat{e}_{j_2,x_2}^2\right]\\
&\leq C_\lambda^4 \gamma_N^6\beta^2 \frac{\log N}{N^2}\sum_{x',x''\in B_0^N}\sum_{(j_1,x_1),(j_2,S_{j_2}),(j_1',S_{j_1'}),(j_1'',S_{j''_1})}\p(x',\J^2\J^1_1)\p(x'',\J^2\J^1_2)\\
&+C_\lambda^4 \gamma_N^6\beta^2 \frac{\log N}{N^2}\sum_{x',x''\in B_0^N}\sum_{(j_1,S_{j_1}),(j_2,x_{2}),(j_1',S_{j_1'}),(j_1'',S_{j''_1})}\p(x',\J^2\J^1_1)\p(x'',\J^2\J^1_2)\\
&\leq C_\lambda^4\gamma_N^6\beta^2 (\log N)^2,
\end{align*}
where we have used local limit theorem in the last line. In particular, we should remark that there exists a constant $C$ such that \begin{align*}
\sum_{j}p_{j}(x,z)p_{j+k}(y,z)<C
\end{align*}
for any $k\in \mathbb{N}$ and $x,y,z\in\mathbb{Z}^2$.

The same thing is true for the case of the summation over $\J^2$ with $|\J^2(S)|=2$. Thus, we have that \begin{align*}
\Var_{Q_S}[A^{q,N}_0]\to 0\ \ \text{as }N\to \infty\ \text{i.e.\,}\beta\to 0.
\end{align*}
This is true for any $q\geq 2$.

Thus, we have that after fixing $C_1$ large, we take $N$ large enough (i.e. $\beta$ is taken small) \begin{align*}
\Var_{Q_S}[A_0^{q,N}]\leq 1
\end{align*}

Thus, we have that \begin{align*}
&\max_{x\in B_0^N}P^x_S\left[	Q_S\left[\exp(f_K(A_0^{q,N}))\right]		\right]\\
&\leq  \max_{x\in B_0^N}P^x_S\left(Q_{S}\left[	A_0^{q,N}\right]<\delta C_{C,1,\lambda,\gamma,q}	\right)\\
& +\max_{x\in B_0^N}P_S^{x}\left[	Q_{S}\left(	A_0^{q,N}<\exp(K^2)	\right):Q_S[A_0^{q,N}]\geq \delta C_{C_1,\lambda,\gamma,q}, 	\right]+e^{-K}.
\end{align*}
Therefore, taking $C_1$ large enough, the right hand side will take small sufficiently and we  complete the proof. \begin{flushright}$\square$\end{flushright}

\section{Some remarks}\label{4}
In this section, we consider the improvement of coarse graining and change of measure argument. In particular, we focus on the asymptotics of $p(\beta)$ for $d=2$ and the critical points $\beta^1_\pm(d)$ and $\beta^2_\pm(d)$.

We should remark that the proof is true for any dimensional case until (\ref{holder}) by retaking $B_y^N$ and  $g_Z$ to $d$-dimensional version. We will change $f_K$ and $A^{q,N}$ in the following argument.

\subsection{Remark and conjecture}\label{rem}

In this subsection, we will  give remarks on the proof and a conjecture of the asymptotics of $p(\beta)$.

Roughly speaking, the following is essential for change of measure technique used in the proof : Find some nice random variable $V\in {\cal G}_N$ such that \begin{enumerate}
\item $Q[V]=0$ and  $Q\left[V^2\right]<\infty$.
\item $Q\left[	V\prod_{i=1}^N	e_{i,S_i}\right]$ will be large with high probability (and more nice properties). 
\end{enumerate}

To obtain much sharper upper bound of $p(\beta)$ by using change of measure argument, we should find good $V$ satisfying (1) and (2).

To find such $V$, we consider an extension of $A_0^{q,N}$. One of extensions of $A_0^{q,N}$ is \begin{align}
V^N=\frac{\sqrt{\log N}}{N}\sum_{y\in B_0^N}P_{S}^y\left[		\prod_{i=1}^N\exp\left(	\gamma_N\eta(i,S_i)-\lambda(\gamma_N)	\right)-1	\right].\label{V}
\end{align}
Indeed,  the $q$-th expansion of the right hand side as Lemma \ref{square} is equal to $A_0^{q,N}$. Letting $\gamma$ be a constant with $0<|\gamma|<\sqrt{\frac{\pi}{\lambda''(0)}}$, that is   \begin{align*}
\sup _NQ\left[W_N^2\left(\frac{\gamma}{\sqrt{\log N}}\right)\right]<\infty.
\end{align*} 
Then, we have that \begin{align*}
\sup_NQ\left[	\left(V^N\right)^2		\right]<\infty.
\end{align*}

Actually, we have that \begin{align*}
Q\left[\left(V^N\right)^2\right]&=\frac{\log N}{N^2}\sum_{y',y''\in B_0^N}P_{S',S''}^{y',y''}\left[	\exp\left(	\left(\lambda(2\gamma_N)-2\lambda(\gamma_N)\right)\sharp\{1\leq i\leq N:S'_i=S''_i\}		\right)	-1	\right]\\
&=\frac{\log N}{N^2}\sum_{k=1}^{N}\sum_{y',y''\in B_0^N}\sum_{1\leq j_1<\cdots<j_k\leq N}\Lambda_{1,N}(\gamma)^{k}P_{S',S''}^{y',y''}\left(S'_{j_i}=S_{j_i}'':i=1,\cdots,k\right)\\
&\leq \frac{\log N}{N^2}\sum_{k=1}^N\sum_{y',y''\in B_0^N}\Lambda_{1,N}(\gamma)^{k}\sum_{1\leq j_1\leq N}P_{S',S''}^{y',y''}(S'_{j_1}=S_{j_1}'')\left(\sum_{1\leq j\leq N}P_{S',S''}^{0,0}(S'_j=S_j'')\right)^{k-1}\\
&\leq \frac{1}{N^2}\sum_{k=1}^N\sum_{y',y''\in B_0^N}\sum_{1\leq j_1\leq N}P_{S',S''}^{y',y''}(S'_{j_1}=S_{j_1}'')\pi \left(	\frac{\lambda''(0)\gamma^2(1+o(1))}{\pi}	\right)^{k}\\
&\leq C_{\lambda,\gamma}<\infty.
\end{align*}

On the other hand, we take $\beta$ with $\gamma \beta >\frac{\pi}{\lambda''(0)}$. 
Then, we have that \begin{align*}
\sup_{N\geq 1}\min_{x\in B_0^N}P_S^x\left[	Q_S\left[V^N\right]	\right]=\infty.
\end{align*}
Indeed, we have that \begin{align*}
&P_S^x\left[Q_S\left[V^N\right]\right]\\
&\geq P_S^x\left[Q\left[	\frac{\sqrt{\log N}}{N}P_{S'}^x\left[\left(\prod_{i=1}^N\exp\left(\gamma_N\eta(i,S_i')-\lambda(\gamma_N)\right)-1\right)\prod_{i=1}^N\exp(\beta_N\eta(i,S_i)-\lambda(\beta_N))		\right]	\right]\right]\\
&=\frac{\log N}{N}P_{S,S'}^{x,x}\left[	\Lambda_{2}(\gamma_N,\beta_N)^{I(S,S')}-1	\right],
\end{align*}
where $\Lambda_2(\gamma_N,\beta_N)=\exp\left(	\lambda(\gamma_N+\beta_N)-\lambda(\gamma_N)-\lambda(\beta_N)	\right)-1$ and $I(S,S')=\sharp\{1\leq i\leq N:S_i=S_i'\}$. We know that $\left(\Lambda_2(\gamma_N,\beta_N)-1\right)\log N \to \gamma\beta \lambda''(0)$.

We find the last term tends to infinity by the same argument as the proof of Lemma \ref{square}. 

Thus, $V^N$ defined in (\ref{V}) may be  one of the nice choice.  Also, since  we chose $0<\gamma<\sqrt{\frac{\pi}{\lambda''(0)} }$  and $\beta \gamma>\frac{\pi}{\lambda''(0)}$ arbitrarily, $\beta>\sqrt{\frac{\pi}{\lambda''(0)} }$. Thus, we have that for $\beta'=\frac{\beta}{\sqrt{\log N}}$ \begin{align*}
p(\beta')\leq -\frac{C}{N}=-C\exp\left(-\frac{\beta^2}{(\beta')^2}\right)
\end{align*}

Also, the lower bound of $p(\beta')$ is improved to \begin{align*}
-C\exp\left(-\frac{c}{(\beta')^2}\right)
\end{align*}by modifying the proof by Lacoin \cite{Lac}, where $0<c<\frac{\pi}{\lambda''(0)}$. 

Thus, we have a conjecture of logarithmic asymptotics of $|p(\beta)|$:\begin{align*}
\log |p(\beta)|\sim -\frac{\pi}{\lambda''(0)\beta^2}\ \ \text{as }\beta\searrow 0.
\end{align*}

At least, we may prove \begin{align*}
\log |p(\beta)|\asymp -\beta^{-2}\ \ \text{as }\beta\searrow 0
\end{align*}
by showing that \begin{align*}
\liminf_{N\to\infty}\frac{1}{N}P_{\tilde{S}}\left[\exp\left(	\frac{\Gamma\sharp\{1\leq i\leq N:S_i=\tilde{S}_i\}}{\log N}		\right)\right]=\infty\ \ \text{$P_S$-a.s.}
\end{align*}\ for $\Gamma>0$ large enough.

\subsection{$\beta^1_\pm(d)=\beta^2_\pm(d)$ ?}

In this subsection, we consider the problem $\beta^1_\pm(d)=\beta_\pm^2(d)$ or not. We know that $\beta^1_\pm(d)=\beta_\pm^2(d)$ for $d=1,2$ (see\ (\ref{vss})). 

We don't have any conjecture for $d\geq 3$ but the following argument is plausible for us to believe $\beta^1_\pm(d)=\beta^2_\pm(d)$ for $d\geq 3$.

We will apply an \textit{extension }of the proof considered in subsection \ref{rem} into the higher dimensional case.

The coarse graining argument doe hold for any dimensional case as mentioned in the top of this section. So we will modify the change of measure method.  
We  take $V^N$ as \begin{align*}
V^N&=\frac{1}{(2\sqrt{N})^{d}}\sum_{x\in [-\sqrt{N},\sqrt{N}]^d}P_{S'}^x\left[	\prod_{i=1}^N\exp\left(	\gamma(\eta(i,S'_i))-\lambda(\gamma)	\right)		\right]\\
&=\frac{1}{(2\sqrt{N})^{d}}\sum_{x\in [-\sqrt{N},\sqrt{N}]^d}W_N^x(\gamma)
\end{align*}
where $W_N^x(\gamma)=\dis P_S^x\left[\exp\left(	\gamma(\eta(i,S_i))-\lambda(\gamma)	\right)\right] $ and  $0<\gamma<\beta^1_+(d)$ (we will take $\gamma$ close enough to $\beta^1_+(d)$ later). 

Since $W_N^x(\gamma)$ is the partition function in the weak disorder phase, we have that for any $x\in\mathbb{Z}^d$, $W^x_N$ converges to some random variable $W_\infty^x(\gamma)$ in $L^1$. Also, $W^x_\infty$ is shift invariant. Therefore, we have from Birkhoff's  ergodic theorem that there exists a random variable $W$ such that  \begin{align*}
\frac{1}{(2\sqrt{N})^d}\sum_{y\in [-\sqrt{N},\sqrt{N}]}W_\infty^y\to W\ \ \ Q\text{-a.s.\,and in $L^1$.}
\end{align*} 
In particular, $V^N $ converges to $W$ in $L^1$.
We define $f_{K,M}(x)=-K1\{x>M\}$. Taking $M$ large enough,  we have \begin{align*}
\sup_{N}Q\left[\exp\left(-\frac{\theta}{1-\theta}f_{K,M}(V^N)\right)\right]\leq 2
\end{align*}
as (\ref{2m}). The same argument before Proposition \ref{moment} implies that it is enough to show that $\dis \max_{x\in B_0^N}P_S^x\left[	Q_S\left[\exp\left(f_{K,M}(V^N)\right)\right]		\right]$		is small 	for $\beta>\beta^1_+(d)$.

Let $\beta>\beta^{1}_+(d)$ and take $\gamma$ such that $\lambda(\beta+\gamma)-\lambda(\beta)-\lambda(\gamma)>\lambda(2\beta^1_+(d))-2\lambda(\beta^1_+(d))$. Then, we have that \begin{align*}
\lim_{n\to \infty}\frac{1}{\sqrt{N}^d}P_{S,S'}\left[	\left(\exp(\lambda(\beta+\gamma)-\lambda(\gamma)-\lambda(\beta))	\right)^{\sharp\{1\leq i\leq N:S_i=S_i'\}}|S	\right]=\infty
\end{align*}
a.s.\ since $\lambda(2\beta^1_+(d))-2\lambda(\beta^1_+(d))\geq \log \alpha_*$ \cite{Bir2} and the above conditional  expectation diverges exponentially \cite{Bir1}.
Thus, it found that  the mean of $V^N$ under $Q_S$ diverges exponentially and we may have that  \begin{align}
\exp(f_{K,M}(V^N))=\exp(-K) \text{ with high $Q_S$-probability}\label{qhigh}
\end{align} by taking $N$ large and $\dis \max_{x\in B_0^N}P_S^x\left[	Q_S\left[\exp\left(f_{K,M}(V^N)\right)\right]		\right]$		is small. 

Therefore, we may have that \begin{align*}
\lim_{n\to \infty}\frac{1}{nN}Q[\log W_{nN}(\beta)]\leq -\frac{\ve}{N}<0.
\end{align*} 
Since $\beta>\beta_+^1(d)$ is arbitrary, $\beta^1_+(d)=\beta^2_+(d)$. In particular, if we can prove (\ref{qhigh}) (or similar statements), then we can show $\beta^1_\pm(d)=\beta^2_\pm(d)$.

\vspace{2em}

{\bf Acknowledgement:} The author appreciates Prof.\,Nobuo Yoshida for his careful reading, comments and the fruitful discussion. This research was supported by JSPS Grant-in-Aid for Young Scientists (B) 26800051.

\nocite{*}


\end{document}